\documentclass[11pt]{article}
\usepackage{amsmath,amsthm,verbatim,amsfonts,amscd, graphicx, yhmath}
\usepackage{hyperref}
\usepackage{parskip}
\usepackage{xparse}
\usepackage{relsize}

\usepackage[utf8]{inputenc}    
\usepackage[T1]{fontenc} 
\usepackage{tikz-cd}
\usepackage{mathtools} 
\usepackage{mathrsfs}
\usepackage{enumerate} 
\usepackage{braket}
\usepackage{bigints}
\usepackage{amssymb}
\let\amslrcorner\lrcorner
\usepackage{MnSymbol}
\let\lrcorner\amslrcorner

\theoremstyle{plain}
\newtheorem{theorem}{Theorem}[section]

\newtheorem{corollary}{Corollary}[section]

\newtheorem*{claim*}{Claim}
\newtheorem*{lemma*}{Lemma}
\newtheorem*{theorem*}{Theorem}

\newtheorem{remark}{Remark}

\theoremstyle{definition}

\DeclareMathOperator{\Hessian}{Hess}
\DeclareMathOperator{\distance}{dist}

\DeclareMathOperator{\D}{\mathrm{D}}
\DeclareMathOperator{\N}{\mathrm{N}}

\renewcommand{\Re}{\operatorname{Re}}

\makeatletter
\newcommand{\extp}{\@ifnextchar^\@extp{\@extp^{\,}}}
\def\@extp^#1{\mathop{\bigwedge\nolimits^{\!#1}}}
\makeatother

\allowdisplaybreaks
\everymath{\displaystyle}

\NewDocumentCommand{\inn}{mo}{%
	\langle #1\rangle
	\IfValueT{#2}{^{}_{\mspace{-3mu}#2}}%
}

\NewDocumentCommand{\dinn}{mo}{%
	\llangle #1\rrangle
	\IfValueT{#2}{^{}_{\mspace{-3mu}#2}}%
}

\NewDocumentCommand{\xinn}{>{\SplitArgument{1}{,}}mo}{%
	\doinnerproduct#1
	\IfValueT{#2}{^{}_{\mspace{-3mu}#2}}%
}
\NewDocumentCommand{\doinnerproduct}{mm}{%
	\langle #1\mid #2\rangle 
}

\makeatletter
\newcommand*\bigcdot{\mathpalette\bigcdot@{1}}
\newcommand*\bigcdot@[2]{\mathbin{\vcenter{\hbox{\scalebox{#2}{$\m@th#1\bullet$}}}}}
\makeatother

\usepackage[
    backend=biber, 
    natbib=true,
    url=false,
]{biblatex}

\IfFileExists{bibliography.bib}
  {\addbibresource{bibliography.bib}}
  {}

\IfFileExists{../bibliography.bib}
  {\addbibresource{../bibliography.bib}}
  {}

\begin{document}
	
	\title{Pseudoconvexity of Level Sets}
	
	\author{Bingyuan Liu}

	\date{\today}

	\maketitle

  \begin{abstract}
    We investigate the pseudoconvexity of level sets of the distance function. First, we show the formula of \cite{Li17} for the Diederich--Forn\ae ss index can be extended to characterize pseudoconvexity. This extension implies that the rate of change of the Levi form is nonincreasing at weakly pseudoconvex points. Additionally, we demonstrate that perturbing the distance function by a factor of $e^{-t|z|^2}$ ensures all level sets sufficiently close to the boundary are strongly pseudoconvex for all $t>0$.
\end{abstract}

\section{Introduction}

Let $\Omega$ be a bounded domain in $\mathbb{C}^n$ with a smooth boundary. We say $\Omega$ is Hartogs pseudoconvex if $-\log(-\delta)$ is plurisubharmonic in $\Omega$, where $\delta(x)$ is the signed distance function:
\[
\delta(x) =
\begin{cases}
\operatorname{dist}(x, \partial\Omega) & \text{if } x \in \Omega^c \\
-\operatorname{dist}(x, \partial\Omega) & \text{if } x \in \Omega
\end{cases}
\]
Alternatively, $\Omega$ is called Levi pseudoconvex if the Levi form $\Hessian_\delta (L_p,L_p) \geq 0$ on $\partial\Omega$ for any $L_p \in T_p^{1,0}(\partial\Omega)$, where $p \in \partial\Omega$. The equivalence between Levi pseudoconvexity and Hartogs pseudoconvexity is a remarkable result discovered in the early 20th century. By the resolution of the Levi problem, these domains are exactly the domains of holomorphy; that is, for every boundary point, there exists a function in $\mathcal{O}(\Omega)$ which cannot be locally extended beyond that point.

Historically, the concept of pseudoconvexity was developed as the complex analogue to real convexity. For a convex domain, at any boundary point, there exists a supporting real hyperplane that remains strictly outside the interior of the domain. Strongly pseudoconvex domains exhibit a similar geometric property: at any boundary point, there exists a local complex analytic variety separated from the interior of the domain. However, for weakly pseudoconvex domains---where the Levi form is only positive semi-definite---this geometric analogy breaks down. A local supporting complex analytic variety is not guaranteed to exist, a phenomenon famously demonstrated by \citet{KN73}. Indeed, many properties are different between pseudoconvexity and convexity. Furthermore, the topological behavior of real convexity does not naively translate to complex pseudoconvexity. In a convex domain, if the boundary of a line segment (its two endpoints) lies entirely within the domain, the entire segment must also lie within it. However, for a pseudoconvex domain, the fact that the boundary of an analytic disc lies within the domain does not guarantee that the interior of the disc is contained within it. A simple counterexample is an annulus in $\mathbb{C}$, which is pseudoconvex. A circle can be placed safely between the inner and outer boundaries, but the interior of that bounding disc will inevitably cover the central hole, falling outside the domain.

Because the global geometry of pseudoconvex domains diverges so sharply from that of real convex domains, mathematicians have sought stricter refinements to recover stronger global properties. A fundamental refinement is the Diederich--Forn\ae ss index (see \citet{CF11} for the definition), introduced by \citet{DF77b}. Rather than relying on the standard plurisubharmonicity of $-\log(-\delta)$, this index quantifies pseudoconvexity by measuring the supremum of exponents $\eta \in (0, 1]$ for which there exists a defining function $\rho$ such that $-(-\rho)^\eta$ is strictly plurisubharmonic. This fractional exponent serves as a precise analytic gauge of the domain's boundary rigidity; notably, \citet{DF77a} demonstrated that it governs critical topological obstructions, such as the existence of a Stein neighborhood basis (see also \citet{FH08, Yu21}).



The Diederich--Forn\ae ss index is intrinsically linked to the derivative of the Levi form (see, e.g., \citet{Ad15}, \citet{FH08}, \citet{Ha17}). In \citet{Li17}, it was shown that the Diederich--Forn\ae ss index equals $\eta_0$ if and only if the following boundary condition holds:
\begin{align*}
    & \left(\frac{1}{1-\eta_\psi}-1\right)\left|(\bar{L} \psi)\left(N_r r\right)+\operatorname{Hess}_r\left(N_r, L\right)\right|^2+\frac{\|\nabla r\|}{2}\left(\frac{\|\nabla r\|}{2} \operatorname{Hess}_\psi(L, L)+\inn{\nabla_L \nabla_{N_r} \nabla r, L}\right)  \leq 0,
\end{align*} where, roughly speaking, $L$ is the tangential direction, $N$ is the normal direction, $r$ is the base defining function, $\psi$ is the weight function, and $\eta$ is related to the Diederich--Forn\ae ss index.

This condition, however, lacks a direct connection to pseudoconvexity. Setting $\eta_0=0$ reduces the inequality to $\left(\frac{\|\nabla r\|}{2} \operatorname{Hess}_\psi(L, L)+\inn{\nabla_L \nabla_{N_r} \nabla r, L}\right) \leq 0$, which further simplifies to $\inn{\nabla_L \nabla_{N_r} \nabla r, L}\leq 0$ when $\psi=0$. Yet, this specific reduction remains unproven, and the results in \citet{Li17} apply exclusively to the Diederich--Forn\ae ss index. In this paper, we establish a direct connection between pseudoconvexity and the third derivative $\inn{\nabla_L \nabla_{N_r} \nabla r, L}$. Furthermore, we leverage this relationship to characterize how strongly pseudoconvex domains approximate a weakly pseudoconvex domain from the interior. In particular, we prove the following two theorems.

\begin{theorem}\label{thm1}
    Let $\Omega$ be a bounded pseudoconvex domain in $\mathbb{C}^n$ with smooth boundary. Then \[\left.\inn{\nabla_L\nabla_{\N}\N, L}\right|_{U\cap\partial\Omega}\leq 0,\] on $\Sigma_L:=\left\{p \in U \cap \partial \Omega: \operatorname{Hess}_\delta(L, L)=0 \text { at } p\right\} $ for an arbitrary coordinate chart $U$.
\end{theorem}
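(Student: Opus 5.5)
The plan is to read $\inn{\nabla_L\nabla_{\N}\N, L}$ as the third covariant derivative of $\delta$, namely $(\nabla^3\delta)(L,\N;\cdot)$ paired with $L$. I will show that at points of $\Sigma_L$ it equals the inward one-sided derivative of the Levi form along the normal line, and then get its sign from Hartogs pseudoconvexity applied to interior level sets.

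\textbf{Step 1: interior level sets are pseudoconvex.} Since $\Omega$ is pseudoconvex, $-\log(-\delta)$ is plurisubharmonic in $\Omega$. Near $\partial\Omega$ the function $\delta$ is smooth and $\N=\nabla\delta$ is a unit field. Expanding
\[
\partial\bar\partial(-\log(-\delta)) = \frac{\partial\bar\partial\delta}{-\delta}+\frac{\partial\delta\wedge\bar\partial\delta}{\delta^2}
\]
and evaluating on any $(1,0)$ vector $L$ with $L\delta=0$ gives $\operatorname{Hess}_\delta(L,L)\ge 0$ at every point of $\{-\varepsilon<\delta<0\}$. So the Levi form of each interior level set is nonnegative.

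\textbf{Step 2: a good extension of $L$.} The integral curves of $\N$ are straight normal segments, so $\nabla_{\N}\N=0$. Extend $L$ from $U\cap\partial\Omega$ into a collar by parallel translation along these segments, so that $\nabla_{\N}L=0$. Because $\N$ is also parallel along these lines, $\inn{L,\N}$ stays $0$. Hence $L$ remains tangent to every level set of $\delta$, and Step 1 applies to it. Now fix $p\in\Sigma_L$ and set
\[
f(t)=\operatorname{Hess}_\delta(L,L)\bigl(p-t\N(p)\bigr).
\]
Then $f(0)=0$ by the definition of $\Sigma_L$, and $f(t)\ge 0$ for small $t>0$ by Step 1. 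Therefore $f'(0)\ge 0$, which means $\N\bigl(\operatorname{Hess}_\delta(L,L)\bigr)(p)\le 0$.

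\textbf{Step 3: identify the normal derivative.} Write $\operatorname{Hess}_\delta(L,L)=\inn{\nabla_L\nabla\delta, L}$, where $\nabla^2\delta$ is extended complex-bilinearly, paired with $\bar L$. Differentiating along $\N$ gives
\[
\N\inn{\nabla_L\nabla\delta,L}=(\nabla_{\N}\nabla^2\delta)(L,\bar L)+\nabla^2\delta(\nabla_{\N}L,\bar L)+\nabla^2\delta(L,\overline{\nabla_{\N}L}).
\]
The last two terms vanish by the choice of extension. Since $\mathbb{C}^n$ is flat, $\nabla^3\delta$ is totally symmetric, so
\[
(\nabla_{\N}\nabla^2\delta)(L,\bar L)=(\nabla^3\delta)(L,\N,\bar L)=\inn{\nabla_L\nabla_{\N}\N,L}.
\]
Combining this with Step 2 yields $\inn{\nabla_L\nabla_{\N}\N,L}\le 0$ at every $p\in\Sigma_L$. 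The coordinate chart $U$ enters only through the local choice of the field $L$.

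\textbf{Main obstacle.} The delicate point is Step 3, specifically pinning down the meaning of $\inn{\nabla_L\nabla_{\N}\N,L}$. If it is read literally as iterated covariant derivatives of vector fields, it vanishes trivially because $\nabla_{\N}\N=0$. If instead one computes $\N\inn{\nabla_L\N,L}$ by commuting the derivatives, a curvature-free bracket term $\inn{\nabla_{[\N,L]}\N,L}$ appears, with $[\N,L]=-\nabla_L\N$, and this term is a square of the second fundamental form with an unfavourable sign. I would therefore commit to the tensorial interpretation, which matches the third-derivative expression in \citet{Li17}. I would check carefully, in real coordinates, that the full symmetry of $\partial^3\delta$ absorbs all bracket terms once $\nabla_{\N}L=0$.

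A secondary check is that $\operatorname{Hess}_\delta$ denotes the complex Hessian ($\partial\bar\partial$ part) and not the real one. I would verify that the symmetrization argument commutes with taking the $(1,1)$ component. It should, since it is a linear operation on constant-coefficient derivatives.
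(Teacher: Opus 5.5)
Your argument proves a strictly weaker inequality than the one stated, because you identify $\N$ with the real unit normal $\nabla\delta$. In the paper $\N=|\D\delta|^{-1}\D\delta$ is the unit $(1,0)$ normal, with $\N+\overline{\N}=\sqrt{2}\nabla\delta$. Only $\nabla_{\N+\overline{\N}}(\N+\overline{\N})=0$ holds; $\nabla_{\N}\N$ is generally nonzero (for the ball $\{|z|<R\}$ it equals $\frac{1}{\sqrt{2}|z|}\N$). So the theorem's quantity is the literal iterated derivative, as the paper treats it, and
\[
\inn{\nabla_L\nabla_{\N}\N, L}=\inn{(\nabla^2\N)(L,\N), L}+\inn{\nabla_{\nabla_L\N}\N, L}.
\]
On $\Sigma_L$ the first term coincides with the tensorial quantity you bound. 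The second term equals $|\nabla_L\N|^2=|\inn{\nabla_L\N,\N}|^2\geq 0$. Equivalently, in the paper's form, $\inn{\nabla_L\nabla_{\N}\N, L}=\N\inn{\nabla_L\N,L}+|\inn{\nabla_L\N,\N}|^2$ on $\Sigma_L$. Your Steps 1--3 give only $\N\inn{\nabla_L\N,L}\leq 0$ there, which yields $\inn{\nabla_L\nabla_{\N}\N, L}\leq|\inn{\nabla_L\N,\N}|^2$, not $\leq 0$. The square you flagged as having an unfavourable sign is exactly what the theorem has to beat. Switching to the tensorial reading discards it and changes the statement.

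The inequality you do prove holds for every domain. Differentiating $|\nabla\delta|^2=1$ twice gives $(\nabla^3\delta)(\nabla\delta,L,\overline{L})=-|(\nabla^2\delta)L|^2\leq 0$ near $\partial\Omega$ (the Riccati equation), so pseudoconvexity is never really used in Steps 1--2. Positivity of the interior Levi forms on tangent vectors cannot produce what is actually needed. Two facts are required:
\begin{enumerate}
\item the quantitative bound $\N\inn{\nabla_L\N,L}\leq-|\inn{\nabla_L\N,\N}|^2$ on $\Sigma_L$;
\item reality of the quantity, which follows from $(\N-\overline{\N})\inn{\nabla_L\N,L}=0$ at a boundary minimum of the Levi form.
\end{enumerate}
The paper gets both algebraically. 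It uses $\nabla_{\N+\overline{\N}}(\N+\overline{\N})=0$ to trade $\inn{\nabla_L\nabla_{\N}\N,L}$ for $-\inn{\nabla_L\nabla_{\overline{\N}}\N,L}$, commutes derivatives, and uses $\N\inn{\nabla_L\N,L}=\overline{\N}\inn{\nabla_L\N,L}$ on $\Sigma_L$. This ends with $2\inn{\nabla_L\nabla_{\N}\N,L}=-\sum_{i=1}^{n-1}|\inn{\N,\nabla_{L_i}L}|^2$.

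One way to keep your route is to apply plurisubharmonicity of $-\log(-\delta)$ to the non-tangential vectors $L+s\alpha\N$ at $p-s\nabla\delta(p)$. Dividing by $s^2$, letting $s\to 0^+$ and minimizing over $\alpha\in\mathbb{C}$ gives exactly the missing bound.
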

\begin{remark}
    Throughout the remainder of the paper, we will generally suppress explicit mention of local coordinate charts. The primary purpose of restricting to a chart $U$ is to ensure the existence of a nonvanishing vector field $L \in \mathscr{C}^\infty(\overline{\Omega}, T^{(0,1)}\mathbb{C}^n)$ satisfying $L\delta = 0$, as such a field may not exist globally. Nevertheless, all subsequent calculations are intrinsically independent of the choice of coordinates.
\end{remark}
The notation for $L$ and $\N$ will be defined in Section \ref{pre}.

In particular, Theorem \ref{thm1} implies that the rate of change of the Levi form is nonpositive at the weakly pseudoconvex points (see Corollary \ref{cor1}). However,  it does not generally ensure strong pseudoconvexity. Indeed, if $\Omega$ is weakly pseudoconvex, $\Omega_{-\epsilon}$ may inherit this weak pseudoconvexity—a phenomenon observed in smoothed bidiscs where $\inn{\nabla_L\N, \N} = 0$. To guarantee that $\Omega_{-\epsilon}$ becomes strongly pseudoconvex immediately upon perturbing $\epsilon$ away from zero, we must modify the distance function, which leads to our next theorem.

\begin{theorem}\label{thm2}
    Let $r_t = \delta e^{-t|z|^2}$ for $t > 0$. The level sets defined by $r_t = c$, for constants $c < 0$ sufficiently close to $0$, are strongly pseudoconvex within an interior neighborhood of $\partial\Omega$.
\end{theorem}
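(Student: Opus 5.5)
The plan is a direct computation of the complex Hessian of $r_t=\delta e^{-t|z|^2}$ restricted to complex tangent vectors of its level sets. Near $\partial\Omega$, $\delta$ is smooth with $|\nabla\delta|=1$, so $r_t$ is smooth and its gradient $e^{-t|z|^2}\bigl(\nabla\delta-t\delta\nabla|z|^2\bigr)$ is nonvanishing for $|\delta|$ small. The level sets $\{r_t=c\}$ are therefore smooth hypersurfaces. Moreover, as $c\to0^-$ they sweep out an interior collar of $\partial\Omega$, since for fixed $z$ the map $\delta\mapsto\delta e^{-t|z|^2}$ is monotone.

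Write $\phi=e^{-t|z|^2}$ and let $W$ be a $(1,0)$ vector with $Wr_t=0$. From $\partial r_t=\phi(\partial\delta+\delta\,\partial\log\phi)$ we get $W\delta=t\delta\,W|z|^2$, so $W\delta=O(\delta)$. Expanding gives
\[
\Hessian_{r_t}(W,W)=\phi\Bigl(\Hessian_\delta(W,W)+2\Re\bigl(\overline{W}\delta\cdot W\log\phi\bigr)+\delta\,\Hessian_{\log\phi}(W,W)+\delta\,|W\log\phi|^2\Bigr).
\]
Here $\log\phi=-t|z|^2$, so $\Hessian_{\log\phi}(W,W)=-t|W|^2$. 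Since $\delta=c/\phi<0$, the term $\delta\,\Hessian_{\log\phi}=t|\delta||W|^2$ is strictly positive, of size $t|\delta|$. The mixed terms are $O(t^2\delta^2|W|^2)$, because $\overline W\delta=O(\delta)$. The term $\delta|W\log\phi|^2$ is $O(t^2|\delta||z|^2|W|^2)$ with the wrong sign. To avoid comparing that term against $t|\delta|$ directly, I would handle it through the curvature term, as follows.

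The key step is a lower bound $\Hessian_\delta(W,W)\ge -C\delta^2|W|^2$, or better, a bound that combines with $\delta|W\log\phi|^2$. Decompose $W=L+aN$, where $L$ is tangent to the level set of $\delta$ through the point and $N$ is the unit complex normal. Then $a=O(t\delta)$, so $\Hessian_\delta(W,W)=\Hessian_\delta(L,L)+O(t|\delta|\,|W|^2\cdot|\nabla^2\delta|)$. Unfortunately this error is first order in $\delta$, which is the same order as the good term, so the decomposition must be treated more carefully. Exactly one set of terms is linear in $\delta$: the good term $t|\delta||W|^2$, the cross terms $2\Re(a\,\Hessian_\delta(N,L))$, and the variation of $\Hessian_\delta(L,L)$ along the normal.

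For the last of these, Theorem~\ref{thm1} and Corollary~\ref{cor1} say that at weakly pseudoconvex points the normal derivative of the Levi form is nonpositive in the inward-favorable sense. Together with the boundary Levi form being $\ge 0$, this gives $\Hessian_\delta(L,L)\ge -C\delta^2$ on interior level sets, uniformly in $L$ near the null directions. Near directions where the Levi form is positive, positivity persists. I would make this uniform with a compactness and Taylor-expansion argument on the unit complex tangent bundle of $\partial\Omega$.

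For the cross term, I would use a Cauchy--Schwarz absorption. Write $2|a\,\Hessian_\delta(N,L)|\le \epsilon^{-1}|a|^2|\Hessian_\delta(N,L)|^2+\epsilon|L|^2$. Since $a=O(t\delta)$, the first piece is $O(t^2\delta^2)$. The second piece must be absorbed by the Levi form, but this fails in null directions. There, however, $\Hessian_\delta(N,L)$ is controlled on the boundary by pseudoconvexity: a semidefinite form with a null vector $L$ has $\Hessian_\delta(L,\cdot)=0$ on the complex tangent space, though not necessarily in the normal direction. This cross term is the step I expect to be the main obstacle.

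If it cannot be controlled this way, I would first try to simplify by choosing $W$ as a combination whose normal component comes only from $\delta\nabla|z|^2$. Its coefficient is then $t\delta\,\langle\cdot\rangle$, and the cross term is $2t\delta\,\Re(\overline{N|z|^2}\,\Hessian_\delta(N,L))$. This is linear in $t\delta$ with no extra smallness, so strict positivity would need the inequality $t|W|^2>2t|\ldots|$. That suggests the result may require the specific structure $\Hessian_\delta(N,\cdot)$ with $N\delta$ constant, namely $\Hessian_\delta(N,L)=0$ when $|\nabla\delta|\equiv1$. This is the identity I would aim to prove and use.

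Granting it, every first-order term is $t|\delta||W|^2>0$, and all remaining terms are $O(\delta^2)$. So for $|c|$ small, depending on $t$, the Levi form of $\{r_t=c\}$ is positive definite.
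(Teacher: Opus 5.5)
The proposal breaks down at the cross term you flagged, and the identity you plan to use to remove it is false.

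First, a bookkeeping correction. Since $\overline{W}\delta=\overline{W\delta}=-\delta\,\overline{W\log\phi}$, the mixed term equals $-2\delta\,|W\log\phi|^2$. This is of order $t^2|\delta|$, not $t^2\delta^2$, and it is positive. Combined with $\delta|W\log\phi|^2$ it leaves $+t^2|\delta|\,\bigl|W|z|^2\bigr|^2$, so for $W$ tangent to $\{r_t=c\}$
\[
\Hessian_{r_t}(W,W)=e^{-t|z|^2}\Bigl(\Hessian_\delta(W,W)+\frac{|W\delta|^2}{|\delta|}+t|\delta|\,|W|^2\Bigr).
\]
After writing $W=L+aN$ with $a=O(t\delta)$, the term $2\Re\bigl(\bar a\,\Hessian_\delta(L,N)\bigr)$ is first order in $\delta$ and has no sign. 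You propose to kill it with $\Hessian_\delta(N,L)=0$ because $|\nabla\delta|\equiv1$, but that does not hold. The eikonal equation only gives $D^2\delta(\nabla\delta,\cdot)=0$ for the real Hessian. The complex Hessian $\Hessian_\delta(L,N)$ reduces to a multiple of $D^2\delta(L,J\nabla\delta)$, the second fundamental form paired with the characteristic direction $J\nabla\delta$, and this can be nonzero at weakly pseudoconvex points. Up to a factor $\sqrt2$ it is the quantity $\inn{\nabla_L\N,\N}$ that the paper carries through Theorem~\ref{thm1} and Corollary~\ref{cor1}, precisely because it does not vanish in general. For instance, it is not identically zero on the annulus of weakly pseudoconvex points of the Diederich--Forn\ae ss worm domain. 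Moreover, a bound $\Hessian_\delta(L,L)\ge -C\delta^2$ could never absorb a first-order cross term. Deriving that bound from Theorem~\ref{thm1} by Taylor expansion is also not uniform: a sign condition on the normal derivative only on the null set does not control nearby directions.

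What is missing is a quantitative inward growth of the Levi form that dominates the cross term. The paper's own definition of pseudoconvexity supplies it. The bracket above is exactly $|\delta|\,\Hessian_{-\log(-\delta)}(W,W)$, which is $\ge0$ because $-\log(-\delta)$ is plurisubharmonic in $\Omega$ (Oka's lemma). Hence $\Hessian_{r_t}(W,W)\ge t|\delta|e^{-t|z|^2}|W|^2>0$; equivalently, $-\log(-r_t)=-\log(-\delta)+t|z|^2$ is strictly plurisubharmonic, and no decomposition $W=L+aN$ is needed. Applied to $L+aN$, the same inequality gives $|\Hessian_\delta(L,N)|^2\le\Hessian_\delta(L,L)\bigl(\Hessian_\delta(N,N)+\frac{1}{2|\delta|}\bigr)$. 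That is exactly the growth which lets $\Hessian_\delta(L,L)$ and the positive term $t^2|\delta|\,\bigl|W|z|^2\bigr|^2$ absorb your cross term by completing a square.

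The paper avoids Oka's lemma and works at the boundary instead. It differentiates the Levi form of the $r_t$-level sets in the normal direction at null directions. There the cross terms $t(L|z|^2)\inn{\nabla_{\N_t}\nabla r_t,L_t}$ from $I$ and $II$ cancel, and the remaining one enters as $-\frac{1}{2|\D r_t|}|\inn{\nabla_{\N_t}\nabla r_t,L_t}|^2$ with a favorable sign. Theorem~\ref{thm1} then disposes of $\inn{\nabla_L\nabla_\N\nabla\delta,L}$, leaving the strictly negative $-\frac{t}{\sqrt2}e^{-t|z|^2}$. Strictness on a neighborhood of the null set is what makes its continuity and compactness step work.
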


\section{Preliminary}\label{pre}
Let $\nabla=\sum_i\frac{\partial}{\partial x^i}\otimes\frac{\partial}{\partial x^i}$ be the (real) gradient. If $f$ is a scalar function, then $\nabla f=\sum_i\frac{\partial}{\partial x^i}\otimes\frac{\partial f}{\partial x^i}$. Similarly, let $\D=\sum_i \frac{\partial}{\partial z^i}\otimes \frac{\partial}{\partial\bar z^i}$ be the complex gradient and $\D f=\sum_i \frac{\partial}{\partial z^i}\otimes \frac{\partial f}{\partial\bar z^i}$. Note that \[\D\rho+\overline{\D}\rho=\frac{1}{2}\nabla\rho.\] When $f=\rho$, we define the normalized complex gradient of $\rho$ to be $\N_\rho=|\D\rho|^{-1}\D\rho$. So, \[\N_\rho+\overline{\N}_\rho=\frac{1}{2}|\D\rho|^{-1}\nabla\rho.\] It is easy to see that $|\N_\rho|=1$. Also, $|\D\rho|^2=\frac{1}{2}\sum_i\left|\frac{\partial\rho}{\partial\bar z^i}\right|^2=\frac{1}{8}|\nabla\rho|^2$ and $\N_\rho\rho=|\D\rho|^{-1}\D\rho\cdot\rho=|\D\rho|^{-1}\sum_i\left|\frac{\partial\rho}{\partial\bar z^i}\right|^2=2|\D\rho|$. It is easy to check that $|\D\delta|^2=\frac{1}{8}$. Let $\N=\N_\delta$ and so $\N+\overline{\N}=\sqrt{2}\nabla\delta$.

To sum up, we have the following:
\begin{enumerate}
    \item $\N_\rho=\N$ on $\partial\Omega$
    \item $\N$ and $\N_\rho$ are both unit vectors in $\mathbb{C}^n$
    \item $\N+\overline{\N}=\sqrt{2}\nabla\delta$ in $\mathbb{C}^n$
    \item $\N_\rho\rho=\frac{1}{\sqrt{2}}|\nabla\rho|$ and $\N\delta=\frac{1}{\sqrt{2}}$ in $\mathbb{C}^n$
    \item $\inn{\nabla\delta, \frac{\partial}{\partial z^i}}=\frac{\partial\delta}{\partial\bar z^i}$
\end{enumerate}

In this note, $L \in \mathscr{C}^\infty(\mathbb{C}^n, T^{(1,0)}\mathbb{C}^n)$ denotes a vector field satisfying $L\delta = 0$ in a neighborhood of $\partial\Omega$. Additionally, we use $\inn{\cdot, \cdot}$ to denote the standard inner product on $\mathbb{C}^n$.

\section{The negativity brought by the distance function}

In this section, we prove Theorem \ref{thm1}.
\begin{proof}[The proof of Theorem \ref{thm1}]
    Since $\delta$ is the signed distance function, we have that $\nabla_{\nabla\delta}\nabla\delta=0$ because the integral curves of $\nabla\delta$ are unit speed geodesics (see \citet{Pe06}). This means $\nabla_{\N+\overline{\N}}(\N+\overline{\N})=0$ and consequently, $\inn{\nabla_{L}\nabla_{\N+\overline{\N}}(\N+\overline{\N}), L}=0$. Because $\overline{\N}\in\mathscr{C}^\infty(\mathbb{C}^n, T^{(0,1)}\mathbb{C}^n)$ and the covariant derivative $\nabla_{L}\nabla_{\N+\overline{\N}}\overline{\N}\in\mathscr{C}^\infty(\mathbb{C}^n, T^{(0,1)}\mathbb{C}^n)$, we can see that \[\inn{\nabla_{L}\nabla_{\N+\overline{\N}}\N, L}=\inn{\nabla_{L}\nabla_{\N+\overline{\N}}(\N+\overline{\N}), L}=0.\] This means \begin{equation}\label{1}
        \inn{\nabla_{L}\nabla_{\N}\N, L}=-\inn{\nabla_{L}\nabla_{\overline{\N}}\N, L}.
    \end{equation}

    We restrict to a coordinate chart $U$. Observe that, letting $\{L_i\}_{i=1}^n\subset \mathscr{C}^\infty(\mathbb{C}^n, T^{(1,0)}\mathbb{C}^n)$ be an orthonormal basis of $T^{(1,0)}\mathbb{C}^n$ everywhere in $U$ with $\N=L_n$, \[\begin{split}
         &\inn{\nabla_{L}\nabla_{\overline{\N}}\N, L}\\
        =&\inn{\nabla_{\overline{\N}}\nabla_L \N, L}+\inn{\nabla_{[L,\overline{\N}]}\N, L}\\
        =&\inn{\nabla_{\overline{\N}}\nabla_L \N, L}+\sum_{i=1}^{n}\inn{\nabla_{L_i}\N, L}\inn{[L,\overline{\N}], L_i}+\sum_{i=1}^{n}\inn{\nabla_{\overline{L}_i}\N, L}\inn{[L,\overline{\N}], \overline{L}_i}\\
        =&\inn{\nabla_{\overline{\N}}\nabla_L \N, L}-\sum_{i=1}^{n}\inn{\nabla_{L_i}\N, L}\inn{\nabla_{\overline{\N}}L, L_i}+\sum_{i=1}^{n}\inn{\nabla_{\overline{L}_i}\N, L}\inn{\nabla_{L}\overline{\N}, \overline{L}_i}\\
        =&\inn{\nabla_{\overline{\N}}\nabla_L \N, L}+\sum_{i=1}^{n}\inn{\nabla_{L_i}\N, L}\inn{L, \nabla_{\N} L_i}+\sum_{i=1}^{n}\inn{\N, \nabla_{L_i}L}\inn{\nabla_{L}L_i, \N}.
    \end{split}\]
    Note that $\inn{\nabla_L L_i, \N}=\inn{\nabla_{L_i} L, \N}+\inn{[L, L_i], \N}$, but \[\inn{[L, L_i], \N}=\sqrt{2}\inn{[L, L_i], \nabla\delta}=\sqrt{2}LL_i\delta-\sqrt{2}L_iL\delta=0\] for all $1\leq i\leq n$. So \[\inn{\nabla_{L}\nabla_{\overline{\N}}\N, L}
       =\inn{\nabla_{\overline{\N}}\nabla_L \N, L}+\sum_{i=1}^{n}\inn{\nabla_{L_i}\N, L}\inn{L, \nabla_{\N} L_i}+\sum_{i=1}^{n}|\inn{\N, \nabla_{L_i}L}|^2.\]

       When restricting to $\partial\Omega$, due to $0=\sqrt{2}\Hessian_\delta(L, L_i)=\inn{\nabla_L\N, L_i}$ for $1\leq i\leq n-1$, \[\begin{split}
           \left.\inn{\nabla_{L}\nabla_{\overline{\N}}\N, L}\right|_{\partial\Omega}
       &=\left.\inn{\nabla_{\overline{\N}}\nabla_L \N, L}\right|_{\partial\Omega}+\left.|\inn{L, \nabla_{\N} \N}|^2\right|_{\partial\Omega}+\left.\sum_{i=1}^{n}|\inn{\N, \nabla_{L_i}L}|^2\right|_{\partial\Omega}\\
       &=\overline{\N}\left.\inn{\nabla_L \N, L}\right|_{\partial\Omega}+\left.\sum_{i=1}^{n}|\inn{\N, \nabla_{L_i}L}|^2\right|_{\partial\Omega}.
       \end{split}\]

By (\ref{1}), we obtain that \begin{equation}\label{2}
    -\left.\inn{\nabla_{L}\nabla_{\N}\N, L}\right|_{\partial\Omega}
       =\overline{\N}\left.\inn{\nabla_L \N, L}\right|_{\partial\Omega}+\left.\sum_{i=1}^{n}|\inn{\N, \nabla_{L_i}L}|^2\right|_{\partial\Omega}.
\end{equation}

       Since on $\partial\Omega$, $\Hessian_\delta(L, L)=\sqrt{2}\inn{\nabla_L\N, L}$ reaches its minimum of $0$ at weakly pseudoconvex points, we obtain $(\N-\overline{\N})\inn{\nabla_L\N, L}=0$ at those weakly pseudoconvex points. Consequently, at weakly pseudoconvex points, $\N\inn{\nabla_L\N, L}=\overline{\N}\inn{\nabla_L\N, L}$.
On the other hand,
\[\begin{split}
    &\left.\inn{\nabla_L\nabla_{\N}\N, L}\right|_{\partial\Omega}=\left.\inn{\nabla_{\N}\nabla_L\N, L}\right|_{\partial\Omega}+\left.\inn{\nabla_{[L, \N]}\N, L}\right|_{\partial\Omega}\\
    =&\left.\N\inn{\nabla_L\N,L}\right|_{\partial\Omega}-\left.\inn{\nabla_L\N,\nabla_{\overline{\N}}L}\right|_{\partial\Omega}=\left.\N\inn{\nabla_L\N,L}\right|_{\partial\Omega}+\left.|\inn{\nabla_L\N,\N}|^2\right|_{\partial\Omega}.
\end{split}\]
   Consequently, by (\ref{1}), (\ref{2}) and the fact $\N\inn{\nabla_L\N, L}=\overline{\N}\inn{\nabla_L\N, L}$ on weakly pseudoconvex points, we obtain that \[\left.\inn{\nabla_L\nabla_{\N}\N, L}\right|_{\partial\Omega}=-\left.\inn{\nabla_{L}\nabla_{\N}\N, L}\right|_{\partial\Omega}-\left.\sum_{i=1}^{n}|\inn{\N, \nabla_{L_i}L}|^2\right|_{\partial\Omega}+\left.|\inn{\nabla_L\N,\N}|^2\right|_{\partial\Omega}.\]

   Since $\inn{\N, \nabla_{\N} L}=\inn{\N, \nabla_L \N}$, we obtain that \[2\left.\inn{\nabla_L\nabla_{\N}\N, L}\right|_{\partial\Omega}=-\left.\sum_{i=1}^{n-1}|\inn{\N, \nabla_{L_i}L}|^2\right|_{\partial\Omega}\leq 0.\]
\end{proof}
\begin{corollary}\label{cor1}
  Let $\Omega$ be a bounded pseudoconvex domain in $\mathbb{C}^n$ with smooth boundary. Then consider the domain $\Omega_{-\epsilon}:=\{x\in\Omega: \delta(x)\leq -\epsilon\}$ for $\epsilon>0$.  The $\Omega_{-\epsilon}$ makes sense for small $\epsilon$. 
    
    Let $x \in \partial\Omega_{-\epsilon}$ and $p \in \partial\Omega$ be such that $|x - p| = \distance(x, \partial\Omega)$. The rate of change $\left.(\N + \overline{\N})\inn{\nabla_L\N, L}\right|_{\partial\Omega}$ is nonpositive at any point $p \in \partial\Omega$ satisfying $\left.\Hessian_{\delta}(L, L)\right|_p = 0$. If $\left.\Hessian_{\delta}(L, L)\right|_{x\in\partial\Omega_{-\epsilon}}=0$ for all small $\epsilon$, then for any $\epsilon'\in [0,\epsilon]$, letting $p'=[x,p]\cap\partial\Omega_{-\epsilon'}$, we have $\left.\inn{\nabla_L\N,\N}\right|_{p'\in\partial\Omega_{-\epsilon'}}= 0$, where $[x,p]$ denotes the line segment connecting $x$ with $p$.   
\end{corollary}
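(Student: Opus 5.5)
The corollary has two parts, and I will derive both from identities already established in the proof of Theorem \ref{thm1}.

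\textbf{Step 1: the sign of $(\N+\overline{\N})\inn{\nabla_L\N,L}$.} Fix $p\in\partial\Omega$ with $\Hessian_\delta(L,L)|_p=0$. The proof of Theorem \ref{thm1} gives, on $\partial\Omega$,
\[\inn{\nabla_L\nabla_{\N}\N,L}=\N\inn{\nabla_L\N,L}+|\inn{\nabla_L\N,\N}|^2 .\]
At weakly pseudoconvex points it also gives $\N\inn{\nabla_L\N,L}=\overline{\N}\inn{\nabla_L\N,L}$. Hence at $p$
\[(\N+\overline{\N})\inn{\nabla_L\N,L}=2\inn{\nabla_L\nabla_{\N}\N,L}-2|\inn{\nabla_L\N,\N}|^2 .\]
By Theorem \ref{thm1}, the first term equals $-\sum_{i<n}|\inn{\N,\nabla_{L_i}L}|^2\le 0$. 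Since $\N+\overline{\N}=\sqrt2\nabla\delta$, the left side is, up to the factor $\sqrt2$, the real normal derivative of $\inn{\nabla_L\N,L}=\tfrac{1}{\sqrt2}\Hessian_\delta(L,L)$. This is the rate of change of the Levi form along the normal geodesic, and we conclude that it is nonpositive.

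\textbf{Step 2: level sets inherit the identity.} For small $\epsilon'$, each $\Omega_{-\epsilon'}$ is a smooth pseudoconvex-type level set of $\delta$. Its signed distance function is $\delta+\epsilon'$, which has the same gradient, the same $\N$, and $L(\delta+\epsilon')=0$. Because only derivatives of $\delta$ enter, the whole computation in the proof of Theorem \ref{thm1} applies verbatim with $\partial\Omega$ replaced by $\partial\Omega_{-\epsilon'}$. The one external input is that $\Hessian_\delta(L,L)$ attains its minimum $0$ along $\partial\Omega_{-\epsilon'}$ at the relevant point. This is what the hypothesis supplies: $\Hessian_\delta(L,L)=0$ at the points of $[x,p]$ for all small levels. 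By the proof of Theorem \ref{thm1}, it forces $(\N-\overline{\N})\inn{\nabla_L\N,L}=0$ there.

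\textbf{Step 3: conclude along the segment.} The segment $[x,p]$ is the integral curve of $\nabla\delta$, because the normal geodesics are straight lines. The function $\inn{\nabla_L\N,L}$ vanishes identically on it, so its derivative along $\N+\overline{\N}$ vanishes at each $p'$. Combining with $(\N-\overline{\N})\inn{\nabla_L\N,L}=0$ gives $\N\inn{\nabla_L\N,L}=0$ at $p'$. Inserting this into the identity from Step 1 yields
\[0\ge\inn{\nabla_L\nabla_{\N}\N,L}=|\inn{\nabla_L\N,\N}|^2\ge 0 ,\]
so $\inn{\nabla_L\N,\N}|_{p'}=0$.

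\textbf{Main obstacle.} Step 2 is the delicate point. The proof of Theorem \ref{thm1} needs $\Hessian_\delta(L,\cdot)$ to vanish on tangential directions and needs a minimum (first-order) condition in the direction $\N-\overline{\N}$, which is tangent to the level set. Vanishing only along a segment does not give a minimum on the level set. To get it, I would use that pseudoconvexity is preserved by interior distance level sets near the boundary, so $\Hessian_\delta(L,L)\ge0$ on $\partial\Omega_{-\epsilon'}$. Then the zero at $p'$ is a minimum, and the tangential first-order condition follows.
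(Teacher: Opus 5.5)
Your proposal is correct and follows the paper's argument: both combine the identity $\inn{\nabla_L\nabla_{\N}\N,L}=\N\inn{\nabla_L\N,L}+|\inn{\nabla_L\N,\N}|^2$ at weakly pseudoconvex points with Theorem \ref{thm1} and the tangential minimum condition $\N\inn{\nabla_L\N,L}=\overline{\N}\inn{\nabla_L\N,L}$. Your Steps 2--3 spell out what the paper leaves implicit: the same reasoning applies on each level set $\partial\Omega_{-\epsilon'}$, whose signed distance is $\delta+\epsilon'$ and which is pseudoconvex because $-\log(-\delta)$ is plurisubharmonic, and the vanishing of $\Hessian_\delta(L,L)$ along the normal segment forces the normal derivative to vanish.
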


\begin{proof}
    Since \[\begin{split}
    &0\geq\left.\inn{\nabla_L\nabla_{\N}\N, L}\right|_{\partial\Omega}=\left.\inn{\nabla_{\N}\nabla_L\N, L}\right|_{\partial\Omega}+\left.\inn{\nabla_{[L, \N]}\N, L}\right|_{\partial\Omega}\\
    =&\left.\N\inn{\nabla_L\N,L}\right|_{\partial\Omega}-\left.\inn{\nabla_L\N,\nabla_{\overline{\N}}L}\right|_{\partial\Omega}=\left.\N\inn{\nabla_L\N,L}\right|_{\partial\Omega}+\left.|\inn{\nabla_L\N,\N}|^2\right|_{\partial\Omega},
\end{split}\]
it implies, \[\left.\N\inn{\nabla_L\N,L}\right|_{\partial\Omega}+\left.|\inn{\nabla_L\N,\N}|^2\right|_{\partial\Omega}\leq 0.\] In particular, $\left.(\N+\overline{\N})\inn{\nabla_L\N,L}\right|_{\partial\Omega}\leq 0$ and if $\left.(\N+\overline{\N})\inn{\nabla_L\N,L}\right|_{\partial\Omega}= 0$ then $\left.\inn{\nabla_L\N,\N}\right|_{\partial\Omega}= 0$.

\end{proof}

\section{Proof of Theorem \ref{thm2}}
In this section, we prove Theorem \ref{thm2}.
\subsection{Perturbation of $\delta$}

We replace $\delta$ by $r_t=\delta e^{-t|z|^2}$. By computation, we obtain that \[\begin{split}
     &\Hessian_{r_t}(L,L)\\
    =&L\overline{L}r_t-\nabla_L\overline{L}r_t\\
    =&L\overline{L}\delta e^{-t|z|^2}-\nabla_L\overline{L}\delta e^{-t|z|^2}\\
    =&\delta L\overline{L}e^{-t|z|^2}-\delta\nabla_L\overline{L} e^{-t|z|^2}- e^{-t|z|^2}\nabla_L\overline{L}\delta\\
    =&e^{-t|z|^2}\Hessian_\delta(L,L)+\delta\Hessian_{e^{-t|z|^2}}(L,L).
\end{split}
\]

We compute \[\begin{split}
     &\Hessian_{e^{-t|z|^2}}(L,L)\\
    =&L\overline{L}e^{-t|z|^2}-\nabla_{L}\overline{L}e^{-t|z|^2}\\
    =&-tL e^{-t|z|^2}\overline{L}|z|^2+te^{-t|z|^2}\nabla_{L}\overline{L}|z|^2\\
    =&-t e^{-t|z|^2}L\overline{L}|z|^2+t^2e^{-t|z|^2}L |z|^2\overline{L}|z|^2+te^{-t|z|^2}\nabla_{L}\overline{L}|z|^2\\
    =&-te^{-t|z|^2}\Hessian_{|z|^2}(L, L)+t^2e^{-t|z|^2}\left|L|z|^2\right|^2
\end{split}\]

On the other hand, let $L_t$ be tangential to the level set $\{z\in\Omega: \delta e^{t|z|^2}=c\}$. Observe that $\nabla r_t=e^{-t|z|^2}\nabla\delta-t\delta e^{-t|z|^2}\nabla|z|^2$. Compute \[\begin{split}
     &\inn{\nabla_{L_t}\nabla r_t, \nabla_{L_t}}\\
    =&\inn{\nabla_{L_t} e^{-t|z|^2}\nabla\delta, \nabla_{L_t}}-t\inn{\nabla_{L_t}\delta e^{-t|z|^2}\nabla|z|^2, \nabla_{L_t}}.
\end{split}\]

We now compute, letting $L^t_i$ be the orthonormal frame compatible with the level set $\{z\in\Omega: \delta e^{-t|z|^2}=c\}$, \[\begin{split}
     &\N_t\inn{\nabla_{L_t}\nabla r_t, L_t}\\
    =&\inn{\nabla_{\N_t}\nabla_{L_t}\nabla r_t, L_t}+\inn{\nabla_{L_t}\nabla r_t, \nabla_{\overline{\N}_t}L_t}\\
    =&\inn{\nabla_{L_t}\nabla_{\N_t}\nabla r_t, L_t}+\inn{\nabla_{[\N_t,L_t]}\nabla r_t, L_t}+\inn{\nabla_{L_t}\nabla r_t, \nabla_{\overline{\N}_t}L_t}\\
    =&\inn{\nabla_{L_t}\nabla_{\N_t}\nabla r_t, L_t}+\sum_{i=1}^n\inn{[\N_t,L_t], L^t_i}\inn{\nabla_{L^t_i}\nabla r_t, L_t}+\inn{\nabla_{L_t}\nabla r_t, \nabla_{\overline{\N}_t}L_t}.
\end{split}\]

Restricting it to the boundary $\partial\Omega$, note that $L_t=L$ and $\N_t=\N$ on $\partial\Omega$. If $\Hessian_\delta(L,L)=0$, we get on $\partial\Omega$, \[\N_t\inn{\nabla_{L_t}\nabla r_t, L_t}=\inn{\nabla_{L_t}\nabla_{\N_t}\nabla r_t, L_t}+\inn{[\N_t,L_t], \N_t}\inn{\nabla_{\N_t}\nabla r_t, L_t}-\inn{\nabla_{L_t}\nabla r_t, \N_t}\inn{\nabla_{\N_t}\N_t,L_t}=I+II+III.\]

We will compute $\N_t$. Based on the definition $\N_t=\frac{\D r_t}{|\D r_t|}$. Since \[\begin{split}
     &\D r_t=\sum_i\frac{\partial}{\partial z^i}\frac{\partial r_t}{\partial\bar{z}^i}=\sum_i\frac{\partial}{\partial z^i}\otimes e^{-t|z|^2}\left(\frac{\partial\delta}{\partial\bar{z}^i}-t\delta z^i\right)\\
    =&e^{-t|z|^2}\sum_i\frac{\partial}{\partial z^i}\otimes \frac{\partial\delta}{\partial\bar{z}^i}-t\delta e^{-t|z|^2}\sum_i\frac{\partial}{\partial z^i}\otimes  z^i=e^{-t|z|^2}\D\delta-t\delta e^{-t|z|^2}\sum_i\frac{\partial}{\partial z^i}\otimes  z^i.
\end{split}\]

We compute that \[|\D r_t|=e^{-t|z|^2}\sqrt{\frac{1}{2}\sum_i \left|\frac{\partial\delta}{\partial\bar{z}^i}-t\delta z^i\right|^2}\] and \[\N_t=\sqrt{2}\sum_i\frac{\partial}{\partial z^i}\otimes\frac{\frac{\partial\delta}{\partial\bar{z}^i}-t\delta z^i}{\sqrt{\sum_i \left|\frac{\partial\delta}{\partial\bar{z}^i}-t\delta z^i\right|^2}}.\]

We observe that \[\begin{split}
     &\inn{\nabla_{\N_t}\N_t,L_t}=\inn{\nabla_{\N_t}\frac{\D r_t}{|\D r_t|},L_t}=\frac{1}{|\D r_t|}\inn{\nabla_{\N_t}\D r_t,L_t}+\left(\N_t\frac{1}{|\D r_t|}\right)\inn{\D r_t, L_t}\\
    =&\frac{1}{2|\D r_t|}\inn{\nabla_{\N_t}\nabla r_t,L_t}+\left(\N_t\frac{1}{|\D r_t|}\right)\inn{\D r_t, L_t}=\frac{1}{2|\D r_t|}\inn{\nabla_{\N_t}\nabla r_t,L_t}.
\end{split}\]

So, \[III=-\inn{\nabla_{L_t}\nabla r_t, \N_t}\inn{\nabla_{\N_t}\N_t,L_t}=-\frac{1}{2|\D r_t|}\inn{\nabla_{\N_t}\nabla r_t,L_t}\inn{\nabla_{L_t}\nabla r_t, \N_t}=-\frac{1}{2|\D r_t|}|\inn{\nabla_{\N_t}\nabla r_t,L_t}|^2.\]

Also, \[\begin{split}
     &\inn{[\N_t,L_t], \N_t}
    =\inn{[\N_t,L_t], \frac{\D r_t}{|\D r_t|}}
    =\frac{1}{|\D r_t|}\inn{[\N_t,L_t],\D r_t+\overline{\D} r_t}
    = \frac{1}{2|\D r_t|}\inn{[\N_t,L_t],\nabla r_t}\\
    =& \frac{1}{2|\D r_t|}[\N_t,L_t] r_t
    = \frac{1}{2|\D r_t|}(\N_tL_t r_t-L_t\N_t r_t)
    = -\frac{2}{2|\D r_t|}L_t|\D r_t|=-\frac{1}{2|\D r_t|^2}L_t|\D  r_t|^2.
\end{split}\]

We compute that \[\begin{split}
     &L_t|\D  r_t|^2\\
    =&\frac{1}{2}L_t e^{-2t|z|^2}\sum_i  \left|\frac{\partial\delta}{\partial\bar{z}^i}-t\delta z^i\right|^2\\
    =&-t e^{-2t|z|^2}\sum_i  \left|\frac{\partial\delta}{\partial\bar{z}^i}-t\delta z^i\right|^2L_t |z|^2+\frac{1}{2} e^{-2t|z|^2}\sum_i L_t \left|\frac{\partial\delta}{\partial\bar{z}^i}-t\delta z^i\right|^2.
\end{split}\]

Restricting to $\partial\Omega$, we obtain that on $\partial\Omega$, \[\inn{[\N_t,L_t], \N_t}=t L_t |z|^2.\]

So, on $\partial\Omega$, \[II=\inn{[\N_t,L_t], \N_t}\inn{\nabla_{\N_t}\nabla r_t, L_t}=t (L_t |z|^2)\inn{\nabla_{\N_t}\nabla r_t, L_t}.\]
\subsection{The calculation of $\inn{\nabla_{L}\nabla_{\N}\nabla r_t, L}$}
We observe that \[\begin{split}
     &\nabla_{L}\N_t=\sqrt{2} \sum_i\frac{\partial}{\partial z^i}\otimes L\left(\frac{\frac{\partial\delta}{\partial\bar{z}^i}-t\delta z^i}{\sqrt{\sum_i \left|\frac{\partial\delta}{\partial\bar{z}^i}-t\delta z^i\right|^2}}\right)\\
    =&\sqrt{2} \sum_i\frac{\partial}{\partial z^i}\otimes \frac{\sqrt{\sum_i \left|\frac{\partial\delta}{\partial\bar{z}^i}-t\delta z^i\right|^2}L\left(\frac{\partial\delta}{\partial\bar{z}^i}-t\delta z^i\right)-\left(\frac{\partial\delta}{\partial\bar{z}^i}-t\delta z^i\right)L\sqrt{\sum_i \left|\frac{\partial\delta}{\partial\bar{z}^i}-t\delta z^i\right|^2}}{\sum_i \left|\frac{\partial\delta}{\partial\bar{z}^i}-t\delta z^i\right|^2}\\
    =&\sqrt{2} \sum_i\frac{\partial}{\partial z^i}\otimes \frac{\sqrt{\sum_i \left|\frac{\partial\delta}{\partial\bar{z}^i}-t\delta z^i\right|^2}L\left(\frac{\partial\delta}{\partial\bar{z}^i}-t\delta z^i\right)-\left(\frac{\partial\delta}{\partial\bar{z}^i}-t\delta z^i\right)L\sqrt{\sum_i \left|\frac{\partial\delta}{\partial\bar{z}^i}\right|^2-\sum_i\left|t\delta z^i\right|^2-2\sum_i\Re\frac{\partial\delta}{\partial\bar{z}^i}t\delta \bar z^i}}{\sum_i \left|\frac{\partial\delta}{\partial\bar{z}^i}-t\delta z^i\right|^2}.
\end{split}\]

Restricting to $\partial\Omega$, both $\delta=0$ and $L\delta=0$ on $\partial\Omega$. Consequently, $\left.\nabla_L\N_t\right|_{\partial\Omega}=\left.\nabla_L\N\right|_{\partial\Omega}$.

Because of this, restricting \[\nabla_{L}\nabla_{\N_t}\nabla r_t=(\nabla^2\nabla r_t) (L, \N_t)+\nabla_{\nabla_L\N_t}\nabla r_t\] on $\partial\Omega$, we obtain that on $\partial\Omega$, \[\nabla_{L}\nabla_{\N_t}\nabla r_t=(\nabla^2\nabla r_t) (L, \N)+\nabla_{\nabla_L\N}\nabla r_t=\nabla_{L}\nabla_{\N}\nabla r_t.\]

We calculate that \[\begin{split}
     &\inn{\nabla_{L}\nabla_{\N}\nabla r_t, L}\\
    =&-t\inn{\nabla_{L}\nabla_{\N}\delta e^{-t|z|^2}\nabla |z|^2 , L}+\inn{\nabla_{L}\nabla_{\N} e^{-t|z|^2}\nabla\delta  , L}\\
    =&-t\inn{\nabla_{L}(\N\delta) e^{-t|z|^2}\nabla |z|^2 , L}-t\inn{\nabla_{L}\delta\nabla_{\N} e^{-t|z|^2}\nabla |z|^2 , L}+\inn{\nabla_{L}(\N e^{-t|z|^2})\nabla\delta  , L}+\inn{\nabla_{L} e^{-t|z|^2}\nabla_{\N}\nabla\delta  , L}\\
    =&-t(\N\delta)\inn{ \nabla_{L}e^{-t|z|^2}\nabla |z|^2 , L}-t\inn{\nabla_{L}\delta\nabla_{\N} e^{-t|z|^2}\nabla |z|^2 , L}+(\N e^{-t|z|^2})\inn{\nabla_{L}\nabla\delta  , L}+\inn{\nabla_{L} e^{-t|z|^2}\nabla_{\N}\nabla\delta  , L}.
\end{split}\]

Restricting to $\partial\Omega$ and assuming $\Hessian_{r_t}(L_t, L_t)=0$ on $\partial\Omega$, \[
     I=\inn{\nabla_{L}\nabla_{\N}\nabla r_t, L}
    =-\frac{t}{\sqrt{2}}e^{-t|z|^2}+\frac{t^2}{\sqrt{2}}e^{-t|z|^2}\left|L|z|^2\right|^2+e^{-t|z|^2}\inn{\nabla_{L} \nabla_{\N}\nabla\delta  , L}-te^{-t|z|^2}(L|z|^2)\inn{\nabla_{\N}\nabla\delta  , L}.\]

Considering the difference between $\inn{\nabla_{\N}\nabla\delta  , L}$ and $\inn{\nabla_{\N_t}\nabla r_t  , L_t}$ on $\partial\Omega$, \[\begin{split}
     &\inn{\nabla_{\N}\nabla r_t  , L}=\inn{\nabla_{\N}\nabla \delta e^{-t|z|^2}  , L}=-t\inn{\nabla_{\N}\delta e^{-t|z|^2}\nabla  |z|^2  , L}+\inn{\nabla_{\N} e^{-t|z|^2}\nabla \delta  , L}\\
    =&-\frac{t}{\sqrt{2}}e^{-t|z|^2}(\overline{L}|z|^2)-t\delta\inn{\nabla_{\N} e^{-t|z|^2}\nabla  |z|^2  , L}+e^{-t|z|^2}\inn{\nabla_{\N} \nabla \delta  , L}.
\end{split}\]

Restricting to $\partial\Omega$, \[\inn{\nabla_{\N_t}\nabla r_t  , L_t}=-\frac{t}{\sqrt{2}}e^{-t|z|^2}(\overline{L}|z|^2)+e^{-t|z|^2}\inn{\nabla_{\N} \nabla \delta  , L}.\]

So, restricting to $\partial\Omega$ and assuming $\Hessian_{r_t}(L_t, L_t)=0$ on $\partial\Omega$, \[\begin{split}
     &I=\inn{\nabla_{L}\nabla_{\N}\nabla r_t, L}\\
    =&-\frac{t}{\sqrt{2}}e^{-t|z|^2}+\frac{t^2}{\sqrt{2}}e^{-t|z|^2}\left|L|z|^2\right|^2+e^{-t|z|^2}\inn{\nabla_{L} \nabla_{\N}\nabla\delta  , L}-t(L|z|^2)\left(\inn{\nabla_{\N_t}\nabla r_t  , L_t}+\frac{t}{\sqrt{2}}e^{-t|z|^2}(\overline{L}|z|^2)\right)\\
    =&-\frac{t}{\sqrt{2}}e^{-t|z|^2}+e^{-t|z|^2}\inn{\nabla_{L} \nabla_{\N}\nabla\delta  , L}-t(L|z|^2)\inn{\nabla_{\N_t}\nabla r_t  , L_t}.
\end{split}
     \]

\subsection{Total}
So, in total, on $\partial\Omega$ assuming $\Hessian_{r_t}(L_t, L_t)=0$, \[\begin{split}
     &\N_t\inn{\nabla_{L_t}\nabla r_t, L_t}\\
    =&-\frac{t}{\sqrt{2}}e^{-t|z|^2}+e^{-t|z|^2}\inn{\nabla_{L} \nabla_{\N}\nabla\delta  , L}-t(L|z|^2)\inn{\nabla_{\N_t}\nabla r_t  , L_t}+t (L_t |z|^2)\inn{\nabla_{\N_t}\nabla r_t, L_t}-\frac{1}{2|\D r_t|}|\inn{\nabla_{\N_t}\nabla r_t,L_t}|^2\\
    =&-\frac{t}{\sqrt{2}}e^{-t|z|^2}+e^{-t|z|^2}\inn{\nabla_{L} \nabla_{\N}\nabla\delta  , L}-\frac{1}{2|\D r_t|}|\inn{\nabla_{\N_t}\nabla r_t,L_t}|^2\\
    \leq&-\frac{t}{\sqrt{2}}e^{-t|z|^2}<0.
\end{split}\]

For any $t>0$, the $\N_t\inn{\nabla_{L_t}\nabla r_t, L_t}<\frac{-t}{2\sqrt{2}}e^{-t|z|^2}$ in a neighborhood of $\Sigma_{L_t}$ by  continuity. This implies all level sets of $r_t$ are strongly pseudoconvex. Indeed, in the neighborhood $U_{L_t}$ of $\Sigma_{L_t}$, as $\epsilon$ decays, the Levi form increases and becomes positive. For the $\partial\Omega\backslash U_{L_t}$, the Levi-form is bounded below by a positive number, say $c$. It is obvious that an interior tubular neighborhood of $\partial\Omega\backslash U_{L_t}$ will have strictly positive Levi form. By the compactness of $L_t/|L_t|$, we prove that $\Omega_{-\epsilon}$ defined by the level sets of $r_t$ is strongly pseudoconvex domain for small $\epsilon$.

This completes the proof of Theorem \ref{thm2}.

\printbibliography
\end{document}